\documentclass[11pt]{article}

\usepackage{amssymb,amsmath,amsthm, calc, graphicx}
\usepackage{epsfig}
\usepackage{breqn}
\usepackage{footnpag}
\usepackage{rotating}
\usepackage{amsfonts}
\usepackage{setspace}
\usepackage{fullpage}
\usepackage{enumitem}
\usepackage{bbold} 
\usepackage{comment}
\usepackage{pgf,tikz}
\usepackage{mathrsfs}
\usetikzlibrary{arrows}
\usepackage{yhmath}
\usepackage{hyperref}

\bibliographystyle{plain}

%% THEOREM TYPE

\newtheorem{thm}{Theorem}%[section]
\newtheorem{definition}{Definition}
\newtheorem{claim}[thm]{Claim}

\newcommand{\thistheoremname}{}
\newtheorem*{genericthm*}{\thistheoremname}
\newenvironment{namedthm*}[1]
{\renewcommand{\thistheoremname}{#1}%
	\begin{genericthm*}}
	{\end{genericthm*}}

%%%%%%%%%%%%%%%%%%%%%%%%%%%%%%%%%%%%%%%%%%%

\newcommand{\abs}[1]{\left\lvert{#1}\right\rvert}

\title{On the Linear Cycle Cover Conjecture of Gy\'arf\'as and S\'ark\"ozy}

\author{
	Beka Ergemlidze\thanks{
		Department of Mathematics, Central European University, Budapest.
		E-mail: beka.ergemlidze@gmail.com
	} \qquad Ervin Gy\H{o}ri \thanks{R\'enyi Institute, Hungarian Academy of Sciences and 
	Department of Mathematics, Central European University, Budapest. E-mail: gyori.ervin@renyi.mta.hu} \qquad Abhishek Methuku \thanks{Department of Mathematics, Central European University, Budapest. E-mail: abhishekmethuku@gmail.com}
}

\begin{document}

%\author{Beka Ergemlidze \and Ervin Gy\H{o}ri \and Abhishek Methuku}
\maketitle

%\begin{abstract}
%In this paper we continue the work of Gyr\'af\'as, Simonovits and the second author on $3$-uniform hypergraphs without linear cycles and answer some questions posed by them. Our main results are the following.
%
%We show that if a 3-uniform hypergraph $H$ without linear cycles, contains no 
%subhypergraph $K_5^3$, then it is $2$-colorable. This implies that the independence number of such hypergraphs is at least $\lceil \frac{V(H)}{2} \rceil$. We prove that this bound is sharp. 
%
%We also show that a $3$-uniform hypergraph without linear cycles, on $n \ge 10$ vertices has a vertex of degree at most $n-2$.
%\end{abstract}

\begin{abstract}

A linear cycle in a $3$-uniform hypergraph $H$ is a cyclic sequence of hyperedges such that two consecutive hyperedges intersect in exactly one element and two nonconsecutive hyperedges are disjoint and $\alpha(H)$ denotes the size of a largest independent set of $H$. In this note, we show that the vertex set of every $3$-uniform hypergraph $H$ can be covered by at most $\alpha(H)$ pairwise edge-disjoint linear cycles (where we accept a vertex and a hyperedge as a linear cycle), proving a weaker version of a conjecture of Gy\'arf\'as and S\'ark\"ozy.
\end{abstract}

\section{Introduction}

A well-known theorem of P\'osa \cite{Posa} states that the vertex set of every graph $G$ can be partitioned into at most $\alpha(G)$ cycles where $\alpha(G)$ denotes the independence number of $G$ (where a vertex or an 
edge is accepted as a cycle). 

\begin{definition} 
A \emph{(linear cycle) linear path} is a (cyclic) sequence of hyperedges such that two consecutive hyperedges intersect in exactly one element and two nonconsecutive hyperedges are disjoint.

%from a vertex by repeatedly adding hyperedges so that the next hyperedge always intersects the previous hyperedge in a vertex of degree one. A \emph{linear cycle} is obtained from a linear path of at least two hyperedges, by adding a hyperedge that intersects the first and last hyperedges of the linear path in one of their degree one vertices. 
\end{definition}

An independent set of a hypergraph $H$ is a set of vertices that contain no hyperedges of $H$. Let $\alpha(H)$ denote the size of a largest independent set of $H$ and we call it the independence number of $H$. Gy\'arf\'as and S\'ark\"ozy \cite{Gy_Sar} conjectured that the following extension of P\'osa's theorem holds: One can partition every $k$-uniform hypergraph $H$ into at most $\alpha(H)$ linear cycles (here, as in P\'osa's theorem, vertices and subsets of hyperedges are accepted as linear cycles). We show the following:

\begin{thm}
\label{mainthm}
If $H$ is a $3$-uniform hypergraph, then its vertex set can be covered by at most $\alpha(H)$ edge-disjoint linear cycles (where we accept a single vertex or a hyperedge as a linear cycle).
\end{thm}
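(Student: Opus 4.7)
The plan is to take a cover $\mathcal{L}$ of $V(H)$ by edge-disjoint linear cycles (with single vertices and single hyperedges as allowed degenerate cases) of minimum cardinality, and then produce an independent set of $H$ of size $|\mathcal{L}|$; this would show $\alpha(H) \geq |\mathcal{L}|$ as required. Such a cover exists because partitioning $V(H)$ into singletons is already a valid cover. I then aim to choose one representative vertex $r_L \in V(L)$ for each $L \in \mathcal{L}$ so that the multiset $\{r_L\}_{L \in \mathcal{L}}$ consists of distinct vertices forming an independent set of $H$.

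The argument proceeds by contradiction: assume that for every choice of representatives, either two distinct cycles contribute the same vertex, or three cycles contribute a triple that is a hyperedge of $H$. The crucial step is to show that any such unavoidable conflict can be exploited to merge cycles of $\mathcal{L}$ into a cover using strictly fewer cycles, contradicting the minimality of $|\mathcal{L}|$. The cleanest case is when three singleton cycles $\{r_1\},\{r_2\},\{r_3\}$ are forced in the selection and $f=\{r_1,r_2,r_3\}$ is a hyperedge of $H$ not already used by $\mathcal{L}$: replacing these three singletons by the single hyperedge-cycle $f$ reduces $|\mathcal{L}|$ by two. When some of the $L_i$ are longer cycles, I would break such a cycle open at $r_i$ into a linear path, splice these paths via $f$ with the pieces coming from the other $L_j$'s, and close the result into either a single linear cycle or a union of two, in either case saving at least one cycle compared with $\mathcal{L}$. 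The symmetric situation where two representatives coincide means the corresponding two cycles share a vertex, and an analogous splicing argument would apply.

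The main obstacle is carrying out these merges so that the resulting objects are genuinely linear cycles (or degenerate ones); naively gluing two linear cycles at a shared vertex, for instance, yields a figure-eight, which is not a linear cycle, so the splicing needs to open one cycle into a path and then close using the free hyperedge $f$. To keep the case analysis tractable I would impose secondary optimality conditions on $\mathcal{L}$, e.g.\ minimizing the number of singleton cycles among all minimum covers, and then maximizing the total number of edges used, so that any residual obstruction pattern forces strong structural constraints and the merging can be performed locally. A further delicate point is the case when the hyperedge $f=\{r_1,r_2,r_3\}$ of representatives is already an edge of some cycle $L_0\in \mathcal{L}$; there $L_0$ shares vertices with each $L_i$ containing $r_i$, and I would exploit this shared structure to re-route $L_0$ through the $L_i$'s and again reduce $|\mathcal{L}|$.
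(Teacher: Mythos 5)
Your approach --- take a cover $\mathcal{L}$ by edge-disjoint linear cycles of minimum cardinality and exhibit an independent transversal, turning any obstruction into a merge that contradicts minimality --- is genuinely different from the paper's proof, which inducts on $\alpha(H)$ via a longest linear path, the linear cycle through its longest initial segment, and a contracted hypergraph with ``red'' edges. However, as sketched it has two real gaps. The first is the global-to-local step. The negation of ``some choice of representatives is independent'' quantifies over \emph{all} choice functions at once; it does not produce three specific cycles $L_1,L_2,L_3$ such that every admissible choice of representatives from them spans a hyperedge. Different choices can be blocked by different hyperedges involving different cycles, so no single ``forced'' local configuration need exist. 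This is precisely the difficulty of independent-transversal problems, and your secondary optimality conditions (fewest singletons, most edges used) are asserted, not shown, to collapse the obstruction to a local one. (Note also that in a minimum cover every cycle has a private vertex, else it could be deleted, so distinctness of representatives is automatic; the real issue is only independence.)

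The second gap is the merging itself. A $3$-uniform hyperedge $f=\{r_1,r_2,r_3\}$ can be consecutive to at most two other hyperedges of a linear cycle, each meeting $f$ in exactly one vertex, so $f$ cannot act as a junction splicing three nontrivial paths into one linear cycle: the third vertex of $f$ is forced to be a ``middle'' vertex of $f$ and cannot attach a third path. Moreover, ``opening'' a linear cycle $L_i$ at a prescribed vertex $r_i$ into a linear path that still covers $V(L_i)$ and has $r_i$ as an endpoint is not generally possible --- the cyclic hyperedge sequence read as a path has intersecting non-consecutive end hyperedges, and deleting a hyperedge to repair this may uncover its middle vertex. The cases that do work (three singletons replaced by $f$; one nontrivial cycle plus two singletons replaced by that cycle together with $f$) save a cycle, but the configurations with two or three nontrivial cycles, and the case where $f$ already lies on some cycle of $\mathcal{L}$, are exactly where the argument is needed and exactly where the splicing is not shown to produce linear cycles. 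Since these merges are the engine of your contradiction, the proof does not go through as written; the paper's inductive scheme avoids transversals entirely by deleting a carefully chosen cycle $C$ and adding red edges so that the independence number strictly drops.
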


%\begin{thm}
%	\label{mainthm}
%	The set of hyperedges of every $3$-uniform hypergraph can be partitioned into at most $\alpha(H)$ (edge-disjoint) linear cycles (where we accept a single vertex or a hyperedge as a cycle).
%\end{thm}

\section{Proof of Theorem \ref{mainthm}}
We call a hypergraph \emph{mixed} if it can contain hyperedges of both sizes $2$ and $3$. We will in fact prove our theorem for mixed hypergraphs (which is clearly a bigger class of hypergraphs than $3$-uniform hypergraphs). More precisely, we will prove the following stronger theorem: 

\begin{thm}
If $H$ is a mixed hypergraph, then its vertex set $V(H)$ can be covered by at most $\alpha(H)$ edge-disjoint linear cycles (where we accept a single vertex or a hyperedge as a linear cycle).
\end{thm}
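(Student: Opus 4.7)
The plan is to prove the theorem by induction on $|V(H)|+|E(H)|$. The base case $E(H)=\emptyset$ is immediate, since $\alpha(H)=|V(H)|$ and every vertex is on its own a singleton linear cycle. For the inductive step, two easy reductions handle the trivial structures. First, if $v$ is an isolated vertex, we apply induction to $H-v$ (satisfying $\alpha(H-v)=\alpha(H)-1$) and add $\{v\}$ as a one-vertex cycle. Second, if $v$ has degree exactly one, contained in a single edge $e$, we take $e$ itself as one of the covering cycles, apply induction to the hypergraph $H-V(e)$ obtained by deleting $V(e)$, and observe that $\alpha(H-V(e))\le \alpha(H)-1$: any maximum independent set of $H-V(e)$ together with $v$ is still independent in $H$, because the only edge of $H$ containing $v$ is $e$, whose remaining vertices are all deleted.

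Assume from now on that every vertex of $H$ has degree at least two. Let $P=e_1 e_2 \cdots e_k$ be a linear path of maximum length in $H$, and let $u$ be its starting vertex, so that $u\in e_1\setminus e_2$. Since $\deg(u)\ge 2$, there is some edge $f\ni u$ distinct from $e_1$; by the maximality of $P$, every such $f$ must meet $V(P)\setminus\{u\}$. The idea is to combine a well-chosen chord $f^{*}\ni u$ with an initial segment $e_1,\ldots,e_{j^{*}}$ of $P$ to produce a linear cycle $C$ through $u$, where $j^{*}$ is chosen maximally so that \emph{every} edge of $H$ containing $u$ has a vertex inside $V(C)\setminus\{u\}$. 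Granted such a $C$, the key independence estimate is immediate: for any independent set $I'$ of $H-V(C)$, the set $I'\cup\{u\}$ is independent in $H$, because every edge through $u$ has a vertex in $V(C)\setminus\{u\}$, hence outside $I'$. Consequently $\alpha(H-V(C))\le\alpha(H)-1$, and combining $C$ with the cover of $V(H)\setminus V(C)$ supplied by the induction hypothesis produces a cover of $V(H)$ by at most $\alpha(H)$ edge-disjoint linear cycles.

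The main obstacle is the construction of a linear cycle $C$ with the required global property. The natural candidate $e_1,\ldots,e_{j^{*}},f^{*}$ may fail to be linear if $f^{*}$ happens to meet some intermediate edge $e_i$ with $1<i<j^{*}$, or if two consecutive edges in this sequence share more than one vertex; in addition, different chords through $u$ can reach different positions in $V(P)$, so one must pick $f^{*}$ carefully in order to absorb \emph{all} chords through $u$ simultaneously. A careful case analysis must show that in every failure mode one can pass to a trimmed sub-cycle (or to an alternative chord at $u$) while preserving the property that every edge through $u$ meets $V(C)\setminus\{u\}$. The mixed-uniformity setting adds some bookkeeping, since a closing chord $f^{*}$ of size two contributes only one new vertex to $C$; but the independence argument and the inductive step are otherwise unchanged.
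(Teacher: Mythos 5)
Your overall architecture (find a long linear path, build a cycle $C$ through its starting vertex $u$, argue $\alpha(H-V(C))\le\alpha(H)-1$ because every edge through $u$ is ``blocked'' by $V(C)$, then induct) is the right first instinct, and your preliminary reductions for isolated and degree-one vertices are fine. But the step you defer to ``a careful case analysis'' --- constructing a linear cycle $C$ such that \emph{every} edge through $u$ meets $V(C)\setminus\{u\}$ --- is not a technicality; it is the crux of the problem, and such a $C$ need not exist. The unavoidable obstruction is an edge of the form $h=u\,x\,y$ where $x$ and $y$ are the two new vertices of a single edge $h_i$ of $P$ (i.e.\ $\{x,y\}=h_i\setminus h_{i-1}$) lying beyond the longest initial segment of $P$ that any linear cycle can contain. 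Such an $h$ meets $V(P)$ only in $\{u,x,y\}$, and it cannot itself close a linear cycle with the initial segment $h_0,\dots,h_i$ because $|h\cap h_i|=2$; nor can any other chord be assumed to reach that far along $P$. So no admissible $C$ contains $x$ or $y$, the edge $h$ is not blocked, and your independence estimate $\alpha(H-V(C))\le\alpha(H)-1$ fails: a maximum independent set of $H-V(C)$ may contain both $x$ and $y$, and then adding $u$ completes the edge $h$.

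This is precisely the configuration the paper's proof is built around. Rather than demanding that $C$ absorb all edges through $v_0$, the paper lets $C$ be a linear cycle containing the longest initial segment of $P$ (possibly just $h_0$), and for each unblocked edge $v_0 v_k u_k$ with $\{v_k,u_k\}\subseteq V(P)\setminus V(C)$ it plants a \emph{red} $2$-edge $\{v_k,u_k\}$ in the reduced hypergraph $H'$ on $V(H)\setminus V(C)$. The red edges force every independent set of $H'$ to omit one of $v_k,u_k$, which is exactly what rescues the claim that $I\cup\{v_0\}$ is independent in $H$ (their Claim 1, whose case analysis also uses the maximality of $P$ and of the initial segment in $C$ to rule out all other bad edges through $v_0$). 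A second claim shows each linear cycle of the inductive cover of $H'$ uses at most one red edge, so each red edge can be lifted back to the $3$-edge through $v_0$ without destroying linearity or edge-disjointness. Note also that this forces working with \emph{mixed} hypergraphs (edges of size $2$ and $3$) throughout the induction, which is why the theorem is stated in that generality. Without some substitute for this red-edge mechanism, your induction does not close.
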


\begin{proof}
We prove the theorem by induction on $\alpha(H)$. If $\abs{V(H)}=1$ or $2$ then the statement is trivial. If $\abs{V(H)} \ge 3$ and  $\alpha(H) = 1$, then $H$ contains all possible edges of size $2$ and there is a Hamiltonian cycle consisting only of edges of size $2$, which is of course a linear cycle covering $V(H)$.

Let $\alpha(H) > 1$. If $E(H) = \emptyset$, then $\alpha(H) = V(H)$ and the statement of our theorem holds trivially since we accept each vertex as a linear cycle. If $E(H) \not = \emptyset$, then let $P$ be a longest linear path in $H$ consisting of hyperedges $h_0, h_1, \ldots, h_l$ ($l \ge 0$). If $h_i$ is of size $3$, then let $h_i = v_i v_{i+1} u_{i+1}$ and if it is of size $2$, then let $h_i = v_i v_{i+1}$. A linear subpath of $P$ starting at $v_0$ (i.e., a path consisting of hyperedges $h_0, h_1, \ldots, h_j$ for some $j \le l$) is called an \emph{initial segment} of $P$. Let $C$ be a linear cycle in $H$ which contains the longest initial segment of $P$. If there is no linear cycle containing $h_0$, then we simply let $C = h_0$. 

Let us denote the subhypergraph of $H$ induced on $V(H) \setminus V(C)$ by $H \setminus C$. Let $R = \{v_ku_k \mid \{v_k,u_k\} \subseteq V(P) \setminus V(C) \text{ and } v_0 v_k u_k \in E(H)\}$ be the set of \emph{red} edges. Let us construct a new hypergraph $H'$ where $V(H') = V(H) \setminus V(C)$ and  $E(H') = E(H \setminus C) \cup R$. We will show that $\alpha(H') < \alpha(H)$ and any linear cycle cover of $H'$ can be extended to a linear cycle cover of $H$ by adding $C$ and extending the red edges by $v_0$. 

\begin{claim}
	\label{independent}
	If $I$ is an independent set in $H'$ then $I \cup {v_0}$ is an independent set in $H$.
\end{claim}
\begin{proof}
	Suppose by contradiction that $h \subseteq (I \cup {v_0})$ for some $h \in E(H)$. Then, clearly $v_0 \in h$ because otherwise $I$ is not an independent set in $H'$. Now let us consider different cases depending on the size of $h \cap (V(P) \setminus V(C))$. If $\abs{h \cap (V(P) \setminus V(C))} = 0$ then, by adding $h$ to $P$, we can produce a longer path than $P$, a contradiction. If $\abs{h \cap (V(P) \setminus V(C))} = 1$, let $h \cap (V(P) \setminus V(C)) = \{x\}$. Then the linear subpath of $P$ between $v_0$ and $x$ together with $h$ forms a linear cycle which contains a larger initial segment of $P$ than $C$, a contradiction. If $\abs{h \cap (V(P) \setminus V(C))} = 2$, then let $h \cap (V(P) \setminus V(C)) = \{x,y\}$. Let us take smallest $i$ and $j$ such that $x \in h_i$ and $y \in h_j$ (i.e., if $x \in h_i \cap h_{i+1}$ then let us take $h_i$). If $i \not = j$, say $ i < j$ without loss of generality, then the linear subpath of $P$ between $v_0$ and $x$ together with $h$ forms a linear cycle with longer initial segment of $P$ than $C$, a contradiction. Therefore, $i = j$ but in this case, $\{x,y\}$ is a red edge and so at most one of them can be contained in $I$, contradicting the assumption that $h = v_0xy \subseteq (I \cup {v_0})$. Hence, $I \cup {v_0}$ is an independent set in $H$ as desired.
\end{proof}

\begin{claim}
	\label{onered}
	The set of hyperedges of every linear cycle in $H'$ contains at most one red edge.
\end{claim}
\begin{proof}
	Suppose by contradiction that there is a linear cycle $C'$ in $H'$ containing at least two hyperedges which are red edges. Then there is a linear subpath $P'$ of $C'$ consisting of hyperedges $h'_0,h'_1,\ldots,h'_m$ such that $h'_0 := v_su_s$ and $h'_m := v_tu_t$ are red edges but $h'_k$ is not a red edge for any $1 \le k \le m-1$. Let us take the smallest $i$ such that $V(P') \cap h_i \not = \emptyset$ and then the smallest $j$ such that $h'_j \cap h_i \not = \emptyset$. It is easy to see that $\abs{V(P') \cap h_i} \le 2$ (since $i$ was smallest). If $\abs{h'_j \cap h_i} = 1$, then the linear cycle consisting of hyperedges $h'_1,\ldots,h'_j$ and $h_i, h_{i-1}, \ldots, h_0$ and $v_0v_su_s$ contains a larger initial segment of $P$ than $C$ (as $h'_j \cap h_i \in V(P) \setminus V(C)$), a contradiction. If $\abs{h'_j \cap h_i} = 2$, then notice that $\abs{h'_{j+1} \cap h_i} = 1$. Now the linear cycle consisting of the hyperedges $h'_{m-1}, h'_{m-2}, \ldots, h'_{j+1}$ and $h_i, h_{i-1}, \ldots, h_0$ and $v_0v_tu_t$ contains a larger initial segment of $P$ than $C$, a contradiction. 
\end{proof}

%If $E(H') \not = \emptyset$, then there is a longest path in it, and so 

By Claim \ref{independent}, $\alpha(H') \le \alpha(H)-1$. So by induction hypothesis, $V(H')$ can be covered by at most  $\alpha(H)-1$ edge-disjoint linear cycles (where we accept a single vertex or a hyperedge as a linear cycle). Now let us replace each red edge $\{x, y\}$ with the hyperedge $xyv_0$ of $H$. Claim \ref{onered} ensures that in each of these linear cycles at most one of the hyperedges is a red edge. Therefore, it is easy to see that even after the above replacement linear cycles of $H'$ remain as \emph{linear} cycles in $H$ and they cover $V(H') = V(H) \setminus V(C)$. Now $C$ along with these linear cycles give us at most $\alpha(H)$ edge-disjoint linear cycles covering $V(H)$, as desired. 
\end{proof}

\section*{Acknowledgement}
The research of the second author is partially supported by the National Research, Development and Innovation Office NKFIH, grant K116769.

\end{document}